\newtheorem{theorem}{Theorem}
\theoremstyle{definition}
\begin{document}
\title{On Conditions Relating to Nonsolvability}
\author{Michael~J.~J.~Barry}
\address{Department of Mathematics\\
Allegheny College\\
Meadville, PA 16335}
\email{mbarry@allegheny.edu}
\thanks{}

\subjclass[2010]{Primary 20D10}

\begin{abstract}
Recent work of Kaplan and Levy refining a nonsolvability criterion proved by Thompson in his N-Groups paper prompts questions on whether certain conditions on groups are equivalent to nonsolvability.
\end{abstract}
\maketitle

In what follows, $G$ is a finite group with identity $1_G$ and $G^\#=G \setminus \{1_G\}$.

Thompson~\cite[Corollary 3]{T1968} proved the following : A finite group $G$ is nonsolvable if and only if there are three elements $x$, $y$, and $z$ in $G ^\#$, whose orders are coprime in pairs, such that $x y z=1_G$.

How much tighter can one make this nonsolvability criterion?  Can one always choose $x$, $y$, and $z$ to be elements of prime-power order, for distinct primes obviously?  Call a group that satisfies this condition a \emph{3PPO-group} (for three prime-power orders).  So is a group nonsolvable if and only if it is a 3PPO-group?  Can one always choose $x$, $y$, and $z$ to be elements of prime order?  Call a group that satisfies this condition a \emph{3PO-group}(for three prime orders).

In a recent paper~\cite{KL2010}, Kaplan and Levy show that $x$, $y$, and $z$ can be chosen so that $x$ has order a power of 2, $y$ has  order a power of $p$ for an odd prime $p$, and $z$ has order coprime to $2 p$.  In other words, two of the three elements can be chosen to have order a power of a prime.  In addition, they show that every nonabelian simple group is a 3PO-group.

In this short note, we show that not every nonsolvable group is a 3PO-group and we exhibit a condition equivalent to 3PPO.  

Our first result below shows $SL(2,5)$, the group of $2 \times 2$ matrices which entries in $GF(5)$ and determinant 1, is not a 3PO-group.  Since $SL(2,5)$ is a non-split extension of a central subgroup of order 2 by $A_5$, $SL(2,5)$ has the smallest possible order of a nonsolvable group that is not simple and does not contain a simple group as a subgroup.

\begin{theorem}
In $SL(2,5)$, there do not exist elements $x$, $y$, and $z$ in $SL(2,5)$ of distinct prime orders with $x y z=e$.
\end{theorem}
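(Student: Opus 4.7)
Since $|SL(2,5)| = 120 = 2^3 \cdot 3 \cdot 5$, Lagrange forces any element of prime order in $SL(2,5)$ to have order $2$, $3$, or $5$, so the prime orders of $x$, $y$, $z$ must be exactly $\{2,3,5\}$. The plan is to exploit the fact that $SL(2,5)$ has a unique involution and that it is central, which will collapse the relation $xyz = 1_G$ to a one-line order mismatch.

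First I would verify that the only element of order $2$ in $SL(2,5)$ is $-I$, and that this element is central. An involution $A \in SL(2,5)$ has minimal polynomial dividing $(X-1)(X+1)$; the possibility $X-1$ yields $A = I$, while the full $X^2-1$ forces $A$ to be diagonalizable with eigenvalues $\pm 1$ and hence $\det A = -1$, which is excluded. Thus the minimal polynomial is $X+1$ and $A = -I$, which obviously commutes with everything.

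With this in hand, whichever of $x,y,z$ has order $2$ must equal $-I$; using centrality of $-I$, the relation $xyz = 1_G$ simplifies (in every assignment of positions) to $uv = -I$, where $u$ and $v$ are the two non-involutions and $\{|u|, |v|\} = \{3,5\}$. Then $u = -v^{-1}$, and since $-I$ is central of order $2$ coprime to $|v^{-1}| \in \{3,5\}$, the element $u$ has order $\mathrm{lcm}(2, |v^{-1}|) \in \{6, 10\}$, contradicting $|u| \in \{3, 5\}$.

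The main point requiring any care is the first step, pinning down $-I$ as the unique involution in $SL(2,5)$; once that is in place, the final contradiction is a forced order computation, and no deeper structural information about $SL(2,5)$ --- such as a full enumeration of its Sylow subgroups or its conjugacy classes --- is needed.
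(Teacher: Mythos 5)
Your proof is correct, but it takes a genuinely different route from the paper. The paper computes the class algebra structure constants: using the character table of $2 \cdot A_5$ it evaluates $\sum_k \chi_k(g_2)\chi_k(g_3)\chi_k(g_5)/\chi_k(1_G)$ for the involution class, the class of order $3$, and each of the two classes of order $5$, finds that both sums vanish, and invokes the standard lemma (Dornhoff, Lemma 19.2) to conclude that no such triple exists. You instead exploit the group-theoretic structure directly: the unique involution of $SL(2,5)$ is the central element $-I$, so the relation $xyz=1_G$ collapses to $uv=-I$ with $\{|u|,|v|\}=\{3,5\}$, and then $u=(-I)v^{-1}$ is a product of commuting elements of coprime orders $2$ and $|v^{-1}|$, hence has order $6$ or $10$ --- a contradiction. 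Each step checks out: the minimal-polynomial argument correctly rules out non-central involutions (eigenvalues $1,-1$ would force determinant $-1$), and the order computation for a product of commuting elements of coprime orders is valid since $\langle -I\rangle\cap\langle v^{-1}\rangle=1$. Your argument is more elementary (no character table needed) and more illuminating: it isolates the central-involution obstruction, and the same reasoning shows that any group whose only prime divisors are $2$, $p$, $q$ and which has a unique (hence central) involution fails to be a 3PO-group. The paper's character-theoretic method, on the other hand, is mechanical and would apply uniformly to any candidate group and any choice of classes, including cases where no such structural shortcut is available.
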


\begin{proof}
In this proof, we use the character table of $2 \cdot A_5 \cong SL(2,5)$ given on p. xxiv of~\cite{C1985} with its class labelings and its ordering of characters, which we label as $\chi_i$ with $1 \leq i \leq 9$.

Now the only possibility for three elements in $SL(2,5)$ to have distinct prime orders is for those orders to be 2, 3, and 5.  The group $SL(2,5)$ has one element of order 2, namely $-I_2$, whose conjugacy class is labeled $1A_1$.  In addition, $SL(2,5)$ has one conjugacy class of elements of order 3 labeled $3A_0$, and two conjugacy classes of elements of order 5 labeled $5A_0$ and $5B_0$.  Now denote $-I_2$ by $g_2$, an element of the conjugacy class $3A_0$ by $g_3$, and elements of the conjugacy classes $5A_0$ and $5B_0$ by $g_5$ and $h_5$, respectively.  Then 
\[\sum_{k=1}^9 \frac{1}{\chi_k(1_G)} \chi_k(g_2)\chi_k(g_3)\chi_k(g_5)
=1+0+0+(-1)+0+b_5+b^*_5+1+0,\]
where the $k$th term on the right-hand side is $\frac{1}{\chi_k(1_G)} \chi_k(g_2)\chi_k(g_3)\chi_k(g_5)$.

This right-hand side simplifies to
\[b_5+b^*_5+1=\frac{-1+\sqrt{5}}{2}+\frac{-1-\sqrt{5}}{2}+1=0.\]

Similarly
\[\sum_{k=1}^9 \frac{1}{\chi_k(1_G)} \chi_k(g_2)\chi_k(g_3)\chi_k(h_5)=0.\]

By~\cite[Lemma 19.2]{D1971}, these two calculations show that there are no elements $x$, $y$, and $z$ of order 2, 3, and 5, respectively, in $SL(2,5)$ such that $x y z =1_{SL(2,5)}$.
\end{proof}

We say that a group $G$ is a \emph{3SS-group} (for three Sylow subgroups) if and only if there are three Sylow subgroups $P_1$, $P_2$, and $P_3$ corresponding to three distinct primes $p_1$, $p_2$, and $p_3$ dividing $|G|$ such that $|P_1 P_2 P_3|<|P_1||P_2||P_3|$.  (Here $P_1P_2 P_3=\{x_1 x_2 x_3 \mid x_i \in P_i, 1 \leq i \leq 3\}$.)  Some time ago, Michael Ward and the present author tried unsuccessfully to prove that a group was nonsolvable if and only if it was a 3SS-group.
\begin{theorem}
A finite group $G$ is a 3PPO-group if and only if it it is a 3SS-group.
\end{theorem}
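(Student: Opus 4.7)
The plan is to prove both directions by analyzing the map $\phi \colon P_1 \times P_2 \times P_3 \to G$ defined by $\phi(a,b,c) = abc$, whose image is exactly $P_1 P_2 P_3$. The condition 3SS says precisely that $\phi$ is not injective, so collisions $a_1 b_1 c_1 = a_2 b_2 c_2$ exist iff $|P_1 P_2 P_3| < |P_1||P_2||P_3|$.

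For the forward direction (3PPO $\Rightarrow$ 3SS), suppose $xyz = 1_G$ with $x,y,z \in G^\#$ of prime-power orders for three distinct primes $p_1,p_2,p_3$. Each $x,y,z$ lies in some Sylow $p_i$-subgroup; call these $P_1,P_2,P_3$. Then both $(1_G,1_G,1_G)$ and $(x,y,z)$ are distinct preimages of $1_G$ under $\phi$, so $\phi$ is not injective and $|P_1 P_2 P_3| < |P_1||P_2||P_3|$.

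For the reverse direction (3SS $\Rightarrow$ 3PPO), fix a collision $a_1 b_1 c_1 = a_2 b_2 c_2$ with $(a_1,b_1,c_1) \ne (a_2,b_2,c_2)$. The crucial step is showing that all three component pairs differ; then I can set
\[ x = a_2^{-1} a_1,\qquad y = b_1 b_2^{-1},\qquad z = b_2 (c_1 c_2^{-1}) b_2^{-1}, \]
and verify $xyz = a_2^{-1} a_1 b_1 c_1 c_2^{-1} b_2^{-1} = 1_G$, with $x \in P_1$, $y \in P_2$, and $z$ in the Sylow $p_3$-subgroup $b_2 P_3 b_2^{-1}$, all of prime-power orders for the distinct primes $p_1,p_2,p_3$.

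The main obstacle is the case analysis verifying all three component pairs differ, which repeatedly exploits that Sylow subgroups for distinct primes intersect trivially. If $c_1 = c_2$, then $a_2^{-1} a_1 = b_2 b_1^{-1} \in P_1 \cap P_2 = \{1_G\}$, forcing $a_1 = a_2$ and $b_1 = b_2$ as well, contradicting distinctness; the case $a_1 = a_2$ is symmetric. The more delicate case is $b_1 = b_2$: then $a_2^{-1} a_1 = b_2 (c_2 c_1^{-1}) b_2^{-1}$ lies in $P_1 \cap b_2 P_3 b_2^{-1}$, which is trivial since these are Sylow subgroups for distinct primes, forcing $a_1 = a_2$ and $c_1 = c_2$, again a contradiction. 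Hence all three pairs differ, so $x,y,z$ are all nontrivial, and the construction above yields the required 3PPO triple.
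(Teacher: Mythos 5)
Your proof is correct and follows essentially the same route as the paper: non-injectivity of the product map $P_1\times P_2\times P_3\to G$ for the forward direction, and for the converse the same rearrangement of a collision into a product of three nontrivial prime-power-order elements, with the third one conjugated into a Sylow $p_3$-subgroup. The only difference is that you spell out the case analysis showing all three components of the two triples must differ, a step the paper merely asserts.
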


\begin{proof}
Suppose that $G$ is a 3PPO-group. Then there are three distinct primes $p_1$, $p_2$, and $p_3$ dividing $|G|$, and three elements $x_1$, $x_2$, and $x_3$ in $G^\#$, such that $x_i$ is a $p_i$-element for $i=1,2,3$ and $x_1 x_2 x_3=1_G$.  If, for $i=1,2,3$, $P_i$ is a Sylow $p_i$-subgroup containing $x_i$, then $|P_1 P_2 P_3|<|P_1||P_2||P_3|$, implying that $G$ is a 3SS-group.

Suppose that $G$ is a 3SS-group.  Then there are three Sylow subgroups $P_1$, $P_2$, and $P_3$ corresponding to three distinct primes $p_1$, $p_2$, and $p_3$ dividing $|G|$ such that $|P_1 P_2 P_3|<|P_1||P_2||P_3|$.  This implies that  there are distinct triples $(x_1,x_2,x_3)$ and $(y_1, y_2, y_3)$ in $P_1 \times P_2 \times P_3$ such that $x_1 x_2 x_3=y_1 y_2 y_3$, implying
\[(y_1^{-1} x_1)(x_2 y_2^{-1})(y_2 x_3 y_3^{-1} y_2^{-1})=1_G.\]

Since the triples are distinct, there is an $i$ with $1 \leq i \leq 3$ such that $x_i \neq y_i$.  From this it follows that for every $i$, $x_i \neq y_i$.  Thus $y_1^{-1} x_1$, $x_2 y_2^{-1}$, and $y_2 x_3 y_3^{-1} y_2^{-1}$ are non-trivial elements of prime-power order for three distinct primes, and this
implies that $G$ is a 3PPO-group.
\end{proof}

To our knowledge, the question of whether the condition 3PPO is equivalent to nonsolvability remains open.

\end{document}